\newtheorem{corollary}{Corollary}%[section]
\newtheorem{example}{Example}%[section]
\newtheorem{lemma}{Lemma}%[section]
\newtheorem{problem}{Problem}%[section]
\newtheorem{remark}{Remark}%[section]
\newtheorem{theorem}{Theorem}%[section]
\def\BC{\mathbb C}
\def\BN{\mathbb N}
\def\BR{\mathbb R}
\def\cF{\mathcal F}
\def\cL{\mathcal L}
\def\cP{\mathcal P}
\def\cU{\mathcal U}
\def\F{\mathrm F}
\def\I{\mathrm I}
\def\T{\mathrm T}
\def\rd{\mathrm d}
\def\e{\mathrm e}
\def\ri{\mathrm i}
\def\supp{\mathrm{supp}}
\def\Ga{\Gamma}
\def\Om{\Omega}
\def\al{\alpha}
\def\be{\beta}
\def\ga{\gamma}
\def\de{\delta}
\def\ve{\varepsilon}
\def\ze{\zeta}
\def\ka{\kappa}
\def\la{\lambda}
\def\vp{\varphi}
\def\f{\frac}
\def\nb{\nabla}
\def\ov{\overline}
\def\pa{\partial}
\def\wt{\widetilde}
\title{Inverse Moving Source Problem for Fractional Diffusion(-Wave) Equations: Determination of Orbits}
\author{
  Guanghui \uppercase{Hu}\\
  Beijing Computational Science Research Center,\\
  Building 9, East Zone, ZPark II, No.\! 10 Xibeiwang East Road, Haidian District, Beijing 100093, China.\\
  \texttt{hu@csrc.ac.cn}\\
  \And
  Yikan \uppercase{Liu}\thanks{Corresponding author.}\\
  Research Institute for Electronic Science, Hokkaido University,\\
  N12W7, Kita-Ward, Sapporo 060-0812, Japan.\\
  \texttt{ykliu@es.hokudai.ac.jp}\\
  \AND
  Masahiro \uppercase{Yamamoto}\\
  Graduate School of Mathematical Sciences, The University of Tokyo\\
  3-8-1 Komaba, Meguro-ku, Tokyo 153-8914, Japan;\\
  Honorary Member of Academy of Romanian Scientists\\
  Splaiul Independentei Street, No.\! 54, 050094 Bucharest, Romania;\\
  Peoples' Friendship University of Russia (RUDN University)\\
  6 Miklukho-Maklaya Street, Moscow 117198, Russian Federation.\\
  \texttt{myama@ms.u-tokyo.ac.jp} \\
}
\begin{document}
\maketitle

\begin{abstract}
This paper is concerned with the inverse problem on determining the orbit of an moving source in fractional diffusion(-wave) equations either in a connected bounded domain of $\BR^d$ or in the whole space $\BR^d$. Based on a newly established fractional Duhamel's principle, we derive a Lipschitz stability estimate in the case of a localized moving source by the observation data at $d$ interior points. The uniqueness for the general non-localized moving source is verified with additional data of more interior observations.
\end{abstract}

\keywords{Inverse moving source problem\and Fractional diffusion(-wave) equation\and Fractional Duhamel's principle\and Lipschitz stability\and Uniqueness}
\MRsubject{74B05\and 35B60}

\section{Introduction}

Let $T>0$ and $\Om\subset\BR^d$, $d=1,2,3,\ldots$, be either a connected bounded domain with a smooth boundary $\pa\Om$ or $\Om=\BR^d$. For $0<\al\le2$, consider an initial(-boundary) value problem for a time-fractional diffusion(-wave) equation
\begin{equation}\label{eq-IBVP-u}
\begin{cases}
(\pa_t^\al+\cL)u(\bm x,t)=g(\bm x-\bm\ga(t)), & {\bm x}\in\Om,\ 0<t<T,\\
\begin{cases}
u(\bm x,0)=0 & \mbox{if }0<\al\le1,\\
u(\bm x,0)=\pa_t u(\bm x,0)=0 & \mbox{if }1<\al\le2,
\end{cases} &{\bm x}\in\Om,\\
u(\bm x,t)=0\quad\mbox{if }\Om\mbox{ is bounded}, & {\bm x}\in\pa\Om,\ 0<t<T.
\end{cases}
\end{equation}
Here, $\cL$ is an elliptic operator with respect to the spatial variable $\bm x\in\BR^d$, and $\pa_t^\al$ denotes the Caputo derivative with respect to the time variable $t\in\BR_+:=(0,\infty)$, which will be precisely defined in Section \ref{sec:2}. The function $g$ is an approximation of Dirac's delta function in $\BR^d$, and $\bm\ga:\BR_+\longrightarrow\BR^d$ describes the orbit of a moving source in $\BR^d$. The governing equation in \eqref{eq-IBVP-u} is called a (time-fractional) diffusion equation when $\al\in(0,1]$, whereas is called a (time-fractional) diffusion-wave equation or a (time-fractional) wave equation when $\al\in(1,2]$. Hence, the system \eqref{eq-IBVP-u} approximates a moving point source problem for the (time-fractional) diffusion(-wave) equation.

In this paper, we are interested in the inverse moving source problem of recovering an unknown orbit function $\bm\ga(t)$ from the solution data detected at a finite number of interior receivers. More precisely, we investigate the following problem.

\begin{problem}[Inverse moving source problem]\label{prob-IMSP}
Let $u$ be the solution to \eqref{eq-IBVP-u} and pick $N$ interior points $\bm x^j\in\Om\ (j=1,\ldots,N)$. Provided that the source profile $g(\bm x)$ is suitably given, determine the source orbit $\bm\ga(t)\ (0\le t\le T)$ by the multiple point observations of $u$ at $\{\bm x^j\}_{j=1}^N\times[0,T]$.
\end{problem}

We remark that the relation between the orbit function and the received dynamical signals is nonlinear, whereas the operator which maps the source profile function $g$ to the forward solution $u$ is linear. Hence, the problem considered in this paper is a nonlinear inverse issue.

We refer to Isakov \cite{Isakov89} for an overview of uniqueness and stability results on inverse source problems. The approaches of applying Carleman estimates and the unique continuation of evolutionary equations have been widely used in the literature and have led to uniqueness and stability results for both inverse coefficient and inverse source problems with the dynamical data over a finite time (see e.g., \cite{ACY-EJAM04,BK1981,CM2006,K1992,JLY2017,Ya1995,Ya1999} as an incomplete list). The concept of increasing stability was explored in \cite{Isakov07} and later investigated further for inverse source problems in \cite{CIL2016}. For stationary (non-moving) sources, the uniqueness in determining source positions with boundary surface data was deduced in \cite{EH}, and those for upper and lower estimates of source positions were derived in \cite{KY01,KY05} in one and  higher dimensions.

To the best of the authors' knowledge, literature on inverse moving source problems arising in fractional diffusion(-wave) equations is rather limited and even remain open. A logarithmic stability and an iterative inversion scheme were considered in \cite{LZ17} for recovering temporal source terms in fractional diffusion equations. Using the moment theory, a uniqueness result to inverse moving source problems in electromagnetism was proved (see \cite{HKLZ}) using boundary surface data. In a series of works \cite{OIO11,N12,O19}, numerical algorithms were examined for reconstructing a moving orbit from boundary data of solutions of the scalar wave equation.

The aim of this paper is to derive stability and uniqueness results with a finite number of interior monitoring points for the fractional model \eqref{eq-IBVP-u} with $\al\in(0,2]$. Our arguments rely on the fixed point theory similarly to \cite{SY11,FK16,WLL16} but are modified to be applicable to the system \eqref{eq-IBVP-u}. For this purpose, we have deduced Duhamel's principle for time-fractional partial differential equations of any order (see Lemma \ref{lem-Duhamel}) and a uniform solution representation for the fractional order $\alpha\in(0,2]$ via the Fourier transform in the whole space (see Lemma \ref{lem-FP-V}). These lemmas have generalized the corresponding well-known results for equations with integral orders in bounded domains, making new contributions to the theory of fractional equations. The interior observation data are here formulated from the theoretical viewpoint, and in a forthcoming paper, on the basis of the current work, we will discuss the inverse problem with more practical data.

The remaining part of this paper is organized as follows. Preliminary knowledge and our main results for fractional equations in bounded and unbounded domains will be stated in Section \ref{sec:2}. Three auxiliary lemmas will be proved in Section \ref{app-proof}. The proofs of our stability result for localized {moving sources} (Theorem \ref{thm-local}) and the uniqueness for non-localized ones (Corollary \ref{cor-global}) will be carried out in Section \ref{sec:4}.

\section{Preliminaries and Main Results}\label{sec:2}

Throughout this paper, we set $\BN:=\{1,2,3,\ldots\}$, and by $C>0$ we denote generic constants which may change from line to line. For $\be\in\BR$, denote the largest integer smaller than or equal to $\be$ by the floor function $\lfloor\be\rfloor$, and the smallest integer larger than or equal to $\be$ by the ceiling function $\lceil\be\rceil$. For $\be\in[0,1]$, define the Riemann-Liouville integral operator $J^\be$ by
\[
J^\be f(t):=\left\{\!
\begin{alignedat}{2}
& f(t), & \quad & \be=0,\\
& \f1{\Ga(\be)}\int_0^t\f{f(s)}{(t-s)^{1-\be}}\,\rd s, & \quad & 0<\be\le1,
\end{alignedat}
\right.\quad f\in C[0,\infty),
\]
where $\Ga(\,\cdot\,)$ is the Gamma function. Then for $\be\in\BR_+$, the Caputo derivative $\pa_t^\be$ and the Riemann-Liouville derivative $D_t^\be$ are defined as
\[
\pa_t^\be:=J^{\lceil\be\rceil-\be}\circ\f{\rd^{\lceil\be\rceil}}{\rd t^{\lceil\be\rceil}},\quad D_t^\be:=\f{\rd^{\lceil\be\rceil}}{\rd t^{\lceil\be\rceil}}\circ J^{\lceil\be\rceil-\be},
\]
where $\circ$ denotes the composition. For the solution representation, we recall the familiar Mittag-Leffler function
\[
E_{\be,\mu}(z):=\sum_{\ell=0}^\infty\f{z^\ell}{\Ga(\be\ell+\mu)},\quad\be\in\BR_+,\ \mu\in\BR,\ z\in\BC,
\]
which satisfies the frequently used estimate (see Podlubny \cite[Theorem 1.5]{P99})
\begin{equation}\label{eq-est-ML}
|E_{\be,\mu}(z)|\le\f C{1+|z|},\quad\be\in(0,2),\ \mu\in\BR_+,\ \f{\pi\be}2<|\mathrm{arg}\,z|\le\pi.
\end{equation}
For later use, we state the following formula concerning the Riemann-Liouville derivative and Mittag-Leffler functions.

\begin{lemma}\label{lem-RL-ML}
For $\be\in\BR_+$ and $\la\in\BR$, we have
\[
D_t^{\lceil\be\rceil-\be}\left(t^{\lceil\be\rceil-1}E_{\be,\lceil\be\rceil}(\la\,t^\be)\right)=t^{\be-1}E_{\be,\be}(\la\,t^\be).
\]
\end{lemma}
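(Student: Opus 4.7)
The plan is to reduce the identity to a term-by-term manipulation of the defining power series of the Mittag-Leffler function. Write $m:=\lceil\be\rceil$ and $\ga:=m-\be\in[0,1)$, so that
\[
t^{m-1}E_{\be,m}(\la\,t^\be)=\sum_{\ell=0}^\infty\f{\la^\ell\,t^{\be\ell+m-1}}{\Ga(\be\ell+m)}.
\]
If $\be\in\BN$, then $\ga=0$, $m=\be$, and $D_t^0$ is the identity, so the claim is immediate. The interesting case is $\ga\in(0,1)$, in which $\lceil\ga\rceil=1$ and hence by the definition of the Riemann-Liouville derivative, $D_t^\ga=\f{\rd}{\rd t}\circ J^{1-\ga}=\f{\rd}{\rd t}\circ J^{\be-m+1}$.

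Next, I would apply $J^{\be-m+1}$ to each monomial via the standard identity $J^\a(t^\nu)=\f{\Ga(\nu+1)}{\Ga(\nu+\a+1)}t^{\nu+\a}$, which yields
\[
J^{\be-m+1}\bigl(t^{\be\ell+m-1}\bigr)=\f{\Ga(\be\ell+m)}{\Ga(\be\ell+\be+1)}\,t^{\be\ell+\be}.
\]
The prefactor $\Ga(\be\ell+m)$ cancels that in the denominator of the $\ell$-th coefficient, leaving $\sum_{\ell=0}^\infty\la^\ell t^{\be\ell+\be}/\Ga(\be\ell+\be+1)$. Differentiating termwise in $t$ and using the identity $(\be\ell+\be)/\Ga(\be\ell+\be+1)=1/\Ga(\be\ell+\be)$ produces exactly $t^{\be-1}E_{\be,\be}(\la\,t^\be)$, which is the right-hand side.

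The only nontrivial point is the justification of the termwise operations $J^{\be-m+1}$ and $\f{\rd}{\rd t}$. This is not really an obstacle, however: since $E_{\be,\mu}$ is entire, the series and its derived series converge uniformly on compact subsets of $t\ge0$, so Fubini (for $J^{\be-m+1}$, written as a bounded linear integral operator on $C[0,T]$) and uniform convergence (for the derivative) both apply, legitimating the interchanges. Thus the lemma reduces to the elementary power-function identity for the Riemann-Liouville integral combined with the series expansion of the Mittag-Leffler function.
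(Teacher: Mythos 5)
Your proof is correct and takes essentially the same route as the paper: both expand the Mittag--Leffler function into its power series and apply the Riemann--Liouville operator term by term, and your identity $J^{\mu}t^{\nu}=\frac{\Ga(\nu+1)}{\Ga(\nu+\mu+1)}t^{\nu+\mu}$ combined with $\Ga(z+1)=z\,\Ga(z)$ is exactly the Beta-function computation the paper carries out explicitly (you additionally settle the integer case and the interchange of summation with $J$ and $\frac{\rd}{\rd t}$, which the paper leaves implicit). One cosmetic remark: for $\be<1$ the differentiated series contains negative powers of $t$, so the uniform convergence justifying termwise differentiation holds on compact subsets of $t>0$ rather than $t\ge0$, which is all that is needed since the identity is asserted for $t>0$.
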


Next, we generalize useful Duhamel's principle to time-fractional evolution equations with arbitrary orders $\be\in\BR_+$.

\begin{lemma}[Fractional Duhamel's principle]\label{lem-Duhamel}
Let $\be\in\BR_+$ and $\Om\subset\BR^d$ be a domain. Let $F:\Om\times(0,T)\longrightarrow\BR$ be a smooth function, and $\cP$ be a linear partial differential operator with respect to $\bm x$ defined in $\Om$ whose coefficients are independent of $t$. If a smooth function $u$ satisfies
\begin{equation}\label{eq-Duhamel-u}
\begin{cases}
(\pa_t^\be+\cP)u=F & \mbox{in }\Om\times(0,T),\\
\pa_t^m u=0\ (m=0,\ldots,\lceil\be\rceil-1) & \mbox{in }\Om\times\{0\},
\end{cases}
\end{equation}
then $u$ allows the representation
\begin{equation}\label{eq-Duhamel}
u(\,\cdot\,,t)=\int_0^t D_t^{\lceil\be\rceil-\be}v(\,\cdot\,,t;s)\,\rd s,\quad0<t<T,
\end{equation}
where $v(\,\cdot\,,\,\cdot\,;s)$ is a smooth function satisfying the following homogeneous equation with a parameter $s\in(0,T)$:
\begin{equation}\label{eq-Duhamel-v}
\begin{cases}
(\pa_t^\be+\cP)v=0 & \mbox{in }\Om\times(s,T),\\
\pa_t^m v=0\ (m=0,\ldots,\lceil\be\rceil-2),\ \pa_t^{\lceil\be\rceil-1}v=F(\,\cdot\,,s) & \mbox{in }\Om\times\{s\}.
\end{cases}
\end{equation}
Here we automatically interpret
\begin{align*}
D_t^\be v(\,\cdot\,,t;s) & =\f{\pa_t^{\lceil\be\rceil}}{\Ga(\lceil\be\rceil-\be)}\int_s^t\f{v(\,\cdot\,,r;s)}{(t-r)^{\be-\lfloor\be\rfloor}}\,\rd r,\\
\pa_t^\be v(\,\cdot\,,t;s) & =\f1{\Ga(\lceil\be\rceil-\be)}\int_s^t\f{\pa_r^{\lceil\be\rceil}v(\,\cdot\,,r;s)}{(t-r)^{\be-\lfloor\be\rfloor}}\,\rd r
\end{align*}
for $\be\not\in\BN$, since $v(\,\cdot\,,t;s)$ is only defined for $t<s$.
\end{lemma}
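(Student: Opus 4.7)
Set $n := \lceil\be\rceil$ and let $\tilde u$ denote the function defined by the right-hand side of \eqref{eq-Duhamel}. The plan is to verify that $\tilde u$ solves \eqref{eq-Duhamel-u} and then invoke uniqueness of smooth solutions to conclude $u=\tilde u$. The case $\be\in\BN$ reduces to the classical Duhamel principle since $D_t^{n-\be}=\mathrm{Id}$, so we assume $n-\be\in(0,1)$ from now on.

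The key reformulation is to express $\tilde u$ through the single function $W(\cdot,t) := \int_0^t v(\cdot,t;s)\,\rd s$. Using the $J_s$-based interpretation given in the statement, I would pull the outer $\pa_t$ of $D_t^{n-\be}$ outside the $s$-integral (the induced boundary contribution at $s=t$ vanishes since the $r$-integral collapses) and then apply Fubini over the triangle $0\le s\le r\le t$, obtaining $\tilde u = \pa_t J^{\be-n+1} W = D_t^{n-\be}W$ with $J$ now starting at $0$. Since $W(\cdot,0)=0$, the semigroup property $J^{n-\be}J^{\be-n+1}=J^1$ gives the companion relation $J^{n-\be}\tilde u=W$. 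Reading off the Taylor expansion $v(\cdot,t;s) = \f{F(\cdot,s)}{(n-1)!}(t-s)^{n-1} + O((t-s)^n)$ from the initial data of $v$ yields $W(\cdot,t)=O(t^n)$ and hence $\pa_t^m\tilde u = O(t^{\be-m})$, which vanishes at $t=0^+$ for every $m=0,\ldots,n-1$ since $\be>n-1\ge m$; this verifies the initial conditions.

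For the equation itself, the vanishing initial data let me identify $\pa_t^\be\tilde u = D_t^\be\tilde u = \pa_t^n J^{n-\be}\tilde u = \pa_t^n W$. A short induction using Leibniz's rule and $\pa_r^k v(\cdot,s;s)=0$ for $k<n-1$ gives $\pa_t^k W(\cdot,t) = \int_0^t\pa_t^k v(\cdot,t;s)\,\rd s$ for every $k\le n-1$, so one further differentiation produces the single surviving boundary term $\pa_t^{n-1}v(\cdot,t;t)=F(\cdot,t)$ and yields $\pa_t^\be\tilde u = F + \int_0^t \pa_t^n v(\cdot,t;s)\,\rd s$. Since $\cP$ has $t$-independent coefficients it commutes with $D_t^{n-\be}$ and with the $s$-integral; substituting $\cP v = -\pa_t^\be v$ from the homogeneous equation and invoking the pointwise identity $D_t^{n-\be}\pa_t^\be v(\cdot,t;s) = \pa_t^n v(\cdot,t;s)$ (which follows from $J^{\be-n+1}_s J^{n-\be}_s = J^1_s$ applied to $\pa_r^n v$, the resulting constant $F(\cdot,s)$ being annihilated by the outer $\pa_t$) delivers $\cP\tilde u = -\int_0^t\pa_t^n v(\cdot,t;s)\,\rd s$. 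Summing gives $(\pa_t^\be+\cP)\tilde u = F$, completing the verification. The principal delicate step is the bookkeeping of non-commutation between $J$, $\pa_t$, and the parameter $s$ in the $J_s$-based fractional operators, and the algebraic identity $D_t^{n-\be}\pa_t^\be v = \pa_t^n v$ is the crux of the computation.
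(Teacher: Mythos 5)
Your proposal is correct, and while it follows the same overall strategy as the paper---verify that the right-hand side of \eqref{eq-Duhamel} solves \eqref{eq-Duhamel-u} (the paper likewise treats this verification as sufficient, so your explicit appeal to uniqueness of the initial value problem is no weaker than the original)---the computation is organized along a genuinely different route. The paper differentiates the representation directly, splits $\pa_t^\be u$ into the terms $\I_1,\I_2$ and computes $-\cP u$ through a third term $\I_3$, each evaluated by repeated exchanges of the order of integration and Beta-function identities for the resulting weakly singular double kernels. You instead reduce everything to the single auxiliary function $W(\,\cdot\,,t)=\int_0^t v(\,\cdot\,,t;s)\,\rd s$ via one Leibniz-plus-Fubini step, giving $\tilde u=D_t^{\lceil\be\rceil-\be}W$ with base point $0$, and then run the proof on operator identities: the semigroup law $J^{\lceil\be\rceil-\be}J^{\be-\lfloor\be\rfloor}=J^1$ (whence $J^{\lceil\be\rceil-\be}\tilde u=W$), the coincidence of Caputo and Riemann--Liouville derivatives under the vanishing Cauchy data you first establish from the Taylor expansion of $v$ at $t=s$, and the pointwise relation $D_t^{\lceil\be\rceil-\be}\pa_t^\be v(\,\cdot\,,t;s)=\pa_t^{\lceil\be\rceil}v(\,\cdot\,,t;s)$, where the constant $F(\,\cdot\,,s)$ is annihilated by the outer derivative. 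This buys a shorter and more transparent argument---the only multiple-integral manipulation is the initial Fubini over the triangle---at the cost of slightly more fractional-calculus machinery and the mild regularity bookkeeping you flag (joint smoothness of $W$, the vanishing $\pa_t^kW(\,\cdot\,,0)=0$ for $k\le\lceil\be\rceil-1$, integrability of $\pa_t^{\lceil\be\rceil}\tilde u$ near $t=0$), all of which is at the same level of formality as the paper's own computations. Your final identities $\pa_t^\be\tilde u=\pa_t^{\lceil\be\rceil}W=F+\int_0^t\pa_t^{\lceil\be\rceil}v\,\rd s$ and $\cP\tilde u=-\int_0^t\pa_t^{\lceil\be\rceil}v\,\rd s$ recombine into exactly the cancellation the paper obtains from $\I_1,\I_2,\I_3$, so the two proofs are fully consistent; I see no genuine gap.
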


The above lemma generalizes similar results in \cite{LRY16,L17}, where the source term was assumed to take the form of separated variables. For other fractional varieties of Duhamel's principle, we refer to Umarov and Saidamatov \cite{US07}, Zhang and Xu \cite{ZX11}.

In the sequel, all vectors are by default column vectors unless specified otherwise. For instance, we write $\bm x=(x_1,\ldots,x_d)^\T\in\BR^d$ and $\nb f=(\pa_1f,\ldots,\pa_d f)^\T$, where $(\,\cdot\,)^\T$ stands for the transpose and $\pa_k=\f\pa{\pa x_k}$ ($k=1,\ldots,d$). The inner product in $\BR^d$ is denoted by $\bm x\cdot\bm y$, and the Euclidean distance $|\cdot|$ is induced as $|\bm x|=(\bm x\cdot\bm x)^{1/2}$. Given a matrix $\bm\Psi=(\psi_{jk})\in\BR^{d\times d}$, the $\ell^2$ norm $|\,\cdot\,|$ and the Frobenius norm $\|\cdot\|_\F$ of $\bm\Psi$ are defined as
\[
|\bm\Psi|:=\max_{|\bm x|=1}|\bm\Psi\bm x|,\quad\|\bm\Psi\|_\F:=\left(\sum_{j,k=1}^d\psi_{jk}^2\right)^{1/2}.
\]
By the norm equivalence in finite dimensional vector spaces, there exists a constant $C>0$ such that
\begin{equation}\label{eq-norm}
C^{-1}|\bm\Psi|\le\|\bm\Psi\|_\F\le C|\bm\Psi|,\quad\forall\,\bm\Psi\in\BR^{d\times d}.
\end{equation}
The open ball centered at $\bm x\in\BR^d$ with radius $r>0$ is denoted by $B_r(\bm x):=\{\bm y\in\BR^d\mid|\bm y-\bm x|<r\}$, and especially we abbreviate $B_r(\bm0)=B_r$.

For a domain $\Om\subset\BR^d$, denote the usual $L^2$ inner product by $(\,\cdot\,,\,\cdot\,)$, and let $H^p(\Om)$ ($p\in\BR$) be the $L^2$-based Sobolev spaces (see Adams \cite{A75}). If $\Om$ is a connected bounded domain, then the elliptic operator $\cL$ in the initial-boundary value problem \eqref{eq-IBVP-u} is defined as
\[
\cL:H^2(\Om)\cap H_0^1(\Om)\longrightarrow L^2(\Om),\quad f\longmapsto-\mathrm{div}(\bm A\nb f)+c\,f,
\]
where the matrix $\bm A=(a_{jk})_{1\le j,k\le d}\in(C^1(\ov\Om))^{d\times d}$ is symmetric and strictly positive definite uniformly on $\ov\Om$, and $c\in L^\infty(\Om)$ is non-negative. In this case, the operator $\cL$ generates an eigensystem $\{(\la_n,\vp_n)\}_{n\in\BN}$ such that $\cL\vp_n=\la_n\vp_n$ in $\Om$. Moreover, it is well known that $0<\la_1<\la_2\le\cdots$, $\la_n\to\infty$ as $n\to\infty$, and $\{\vp_n\}\subset H^2(\Om)\cap H_0^1(\Om)$ forms a complete orthonormal system of $L^2(\Om)$.

For a square integrable function $f\in L^2(\BR^d)$, denote its Fourier transform by
\[
\wt f(\bm\xi)=(\cF f)(\bm\xi):=\f1{(2\pi)^{d/2}}\int_{\BR^d}f(\bm x)\,\e^{-\ri\,\bm x\cdot\bm\xi}\,\rd\bm x.
\]
For $p\in\BR$, the $H^p(\BR^d)$ norm can be represented by using Fourier transform as
\[
\|f\|_{H^p(\BR^d)}=\left(\int_{\BR^d}(1+|\bm\xi|^2)^p|\wt f(\bm\xi)|^2\,\rd\bm\xi\right)^{1/2},\quad f\in H^p(\BR^d).
\]
If $\Om=\BR^d$, then we define the elliptic operator $\cL$ in the initial value problem \eqref{eq-IBVP-u} as
\[
\cL:H^2(\BR^d)\longrightarrow L^2(\BR^d),\quad f\longmapsto-\mathrm{div}(\bm A\nb f)+\bm b\cdot\nb f+c\,f,
\]
where we assume that the matrix $\bm A=(a_{jk})_{1\le j,k\le d}$, the vector $\bm b=(b_1,\ldots,b_d)^\T$ and the scalar $c$ are constants, and $\bm A$ is strictly positive definite. Especially, only in the case of $\al=2$ we additionally assume $\bm b=\bm0$ and $c\ge0$.

Regarding the initial(-boundary) value problem
\begin{equation}\label{eq-IBVP-V}
\begin{cases}
(\pa_t^\al+\cL)V=0 & \mbox{in }\Om\times\BR_+,\\
\begin{cases}
V=v_0 & \mbox{if }0<\al\le1,\\
V=0,\ \pa_t V=v_1 & \mbox{if }1<\al\le2
\end{cases} & \mbox{in }\Om\times\{0\},\\
V=0\quad\mbox{if }\Om\mbox{ is bounded} & \mbox{on }\pa\Om\times\BR_+,
\end{cases}
\end{equation}
we provide the well-posedness results in the following lemma.

\begin{lemma}\label{lem-FP-V}
Let $\Om\subset\BR^d$ be either a connected bounded domain or $\Om=\BR^d$, and $v_0\in H^p(\Om),v_1\in H^{p-2/\al}(\Om)$ for some fixed $p\in\BR$. Then there exists a unique solution $V\in C([0,T];H^p(\Om))$ to \eqref{eq-IBVP-V}. Moreover, if $\Om$ is bounded, then the solution to \eqref{eq-IBVP-V} takes the form
\begin{equation}\label{eq-sol-IBVP-V}
V(\bm x,t)=\sum_{n=1}^\infty(v_{\lceil\al\rceil-1},\vp_n)\,t^{\lceil\al\rceil-1}E_{\al,\lceil\al\rceil}(-\la_n t^\al)\vp_n(\bm x).
\end{equation}
If $\Om=\BR^d$, then the solution to \eqref{eq-IBVP-V} takes the form
\begin{equation}\label{eq-sol-IVP-V}
V(\bm x,t)=\left\{\!\begin{alignedat}{2}
& \f{t^{\lceil\al\rceil-1}}{(2\pi)^{d/2}}\int_{\BR^d}\wt v_{\lceil\al\rceil-1}(\bm\xi)E_{\al,\lceil\al\rceil}(-S(\bm\xi)t^\al)\,\e^{\ri\,\bm\xi\cdot\bm x}\,\rd\bm\xi, & \quad & 0<\al<2,\\
& \f1{(2\pi)^{d/2}}\int_{\BR^d}\wt v_1(\bm\xi)S(\bm\xi)^{-1/2}\sin(S(\bm\xi)^{1/2}t)\,\e^{\ri\,\bm\xi\cdot\bm x}\,\rd\bm\xi, & \quad & \al=2,
\end{alignedat}\right.
\end{equation}
where $S(\bm\xi):=\bm A\bm\xi\cdot\bm\xi+\ri\,\bm b\cdot\bm\xi+c$.
\end{lemma}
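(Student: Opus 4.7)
The plan is to split according to the geometry of $\Om$ and, in each case, diagonalize the spatial operator $\cL$ (eigenfunction expansion on a bounded $\Om$, Fourier transform on $\BR^d$), reducing \eqref{eq-IBVP-V} to an uncoupled family of scalar fractional ODEs of the form $\pa_t^\al y+\mu\,y=0$ with $\pa_t^m y(0)=0$ for $m<\lceil\al\rceil-1$ and $\pa_t^{\lceil\al\rceil-1}y(0)=y_0$. A termwise computation on the Mittag-Leffler series (combined with Lemma \ref{lem-RL-ML}) identifies the unique solution as $y(t)=y_0\,t^{\lceil\al\rceil-1}E_{\al,\lceil\al\rceil}(-\mu t^\al)$. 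Inserting $\mu=\la_n$ and $\mu=S(\bm\xi)$ produces precisely \eqref{eq-sol-IBVP-V} and \eqref{eq-sol-IVP-V}; it then remains to establish convergence of these representations in $C([0,T];H^p(\Om))$ and uniqueness.

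For bounded $\Om$, I would use the characterization $\|f\|_{H^p(\Om)}^2\asymp\sum_n(1+\la_n)^p|(f,\vp_n)|^2$ induced by $\cL$ and establish the key pointwise estimate
\[
(1+\la_n)^{2(\lceil\al\rceil-1)/\al}\bigl|t^{\lceil\al\rceil-1}E_{\al,\lceil\al\rceil}(-\la_n t^\al)\bigr|^2\le C,\quad t\in[0,T],\ n\in\BN,
\]
derived by splitting into the regimes $\la_n t^\al\le 1$ (where the series bound on $E_{\al,\lceil\al\rceil}$ controls the factor) and $\la_n t^\al\ge 1$ (where \eqref{eq-est-ML} applies, the case $\al=2$ being handled by the explicit identity $t\,E_{2,2}(-\la_n t^2)=\la_n^{-1/2}\sin(\sqrt{\la_n}\,t)$). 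Since $2(\lceil\al\rceil-1)/\al$ equals $0$ for $\al\le 1$ and $2/\al$ for $\al\in(1,2]$, this exactly matches the hypotheses $v_0\in H^p(\Om)$ and $v_1\in H^{p-2/\al}(\Om)$, giving $V\in C([0,T];H^p(\Om))$ by dominated convergence. Uniqueness follows by projecting any other solution onto $\vp_n$ and invoking the standard uniqueness of the scalar Caputo problem.

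For $\Om=\BR^d$ the Fourier transform turns $\cL$ into multiplication by the symbol $S(\bm\xi)$, so the modal argument above gives $\wt V(\bm\xi,t)=\wt v_{\lceil\al\rceil-1}(\bm\xi)\,t^{\lceil\al\rceil-1}E_{\al,\lceil\al\rceil}(-S(\bm\xi)t^\al)$ for $0<\al<2$. When $\al=2$, the additional hypothesis $\bm b=\bm 0$, $c\ge 0$ makes $S(\bm\xi)\ge 0$, and the elementary identity $z\,E_{2,2}(-z^2)=\sin z$ reduces the representation to $S(\bm\xi)^{-1/2}\sin(S(\bm\xi)^{1/2}t)$, yielding the second line of \eqref{eq-sol-IVP-V}. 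Plancherel's theorem then transcribes the bounded-domain estimate to $\BR^d$ with $\sum_n(1+\la_n)^p$ replaced by $\int_{\BR^d}(1+|\bm\xi|^2)^p\,\rd\bm\xi$.

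The main obstacle is that \eqref{eq-est-ML} requires $|\arg z|>\pi\al/2$, whereas $S(\bm\xi)=\bm A\bm\xi\cdot\bm\xi+\ri\,\bm b\cdot\bm\xi+c$ is only asymptotically real-positive: on bounded $\bm\xi$-sets the imaginary drift $\ri\,\bm b\cdot\bm\xi$ and a possibly negative constant $c$ (allowed when $\al<2$) can violate the argument condition. I would resolve this by splitting $\BR^d$ into a large ball $\{|\bm\xi|\le R\}$, on which continuity of $E_{\al,\lceil\al\rceil}$ over the compact image of $S$ gives an $\bm\xi$-uniform bound, and its complement, on which the quadratic part $\bm A\bm\xi\cdot\bm\xi$ dominates so that $\arg(-S(\bm\xi)t^\al)$ lies close to $\pi$ and \eqref{eq-est-ML} applies directly. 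For $\al=2$, the elementary inequality $|S^{-1/2}\sin(S^{1/2}t)|^2\le\min(t^2,1/S)$ combined with the coercivity $S(\bm\xi)\ge c_0|\bm\xi|^2+c$ supplies the required $(1+|\bm\xi|^2)$-factor and matches the $2/\al=1$ loss of regularity for $v_1$.
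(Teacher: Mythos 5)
Your proposal is correct and follows essentially the same route as the paper: diagonalize $\cL$ (Fourier transform on $\BR^d$), solve the resulting scalar fractional ODEs by Mittag-Leffler functions, and obtain the $H^p$ bounds by splitting frequencies into a ball $B_R$ (where $E_{\al,\lceil\al\rceil}$ is bounded on a compact set) and its complement (where the quadratic part of $S(\bm\xi)$ forces $|\mathrm{arg}(-S(\bm\xi)t^\al)|>\pi\al/2$ so that \eqref{eq-est-ML} applies), with the $\al=2$ case handled via $S\ge\ka|\bm\xi|^2$ and the sine identity. The only cosmetic difference is that you carry out the eigenfunction estimate for bounded $\Om$ explicitly, whereas the paper simply cites Sakamoto--Yamamoto for that case; your pointwise bound there is the standard one and is consistent with the paper's whole-space computation.
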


In a bounded domain, the solution representation by the eigensystem is well {known} (see Sakamoto and Yamamoto \cite{SY11}). However, solutions to \eqref{eq-IBVP-V} in the whole space $\BR^d$ seem not well investigated to the best of our knowledge, and we refer to Eidelman and Kochubei \cite{EK04} for the fundamental solution. In such a sense, formula \eqref{eq-sol-IVP-V} in Lemma \ref{lem-FP-V} gives a novel solution representation via the Fourier transform.

Now we are well prepared to discuss Problem \ref{prob-IMSP}. We begin with specifying the choices of the source profile $g(\bm x)$ and the orbit $\bm\ga(t)$. Assume that $g$ is smooth and compactly supported, i.e., $g\in C_0^\infty(\Om)$ and there exists a constant $\de>0$ such that $\supp\,g\subset B_\de$. A typical choice of $g$ can be the following bell-shaped function
\begin{equation}\label{eq-bell}
g(\bm x)=\left\{\!\begin{alignedat}{2}
& C\exp\left(\f1{|\bm x|^2-\de^2}\right), & \quad & |\bm x|<\de,\\
& 0, & \quad & |\bm x|\ge\de.
\end{alignedat}\right.
\end{equation}
For the unknown $\bm\ga$, basically we restrict it in the admissible set
\begin{equation}\label{eq-def-U0}
\cU_0:=\{\bm\ga\in(C^\infty[0,T])^d\mid\bm\ga(0)=\bm0,\ \|\bm\ga'\|_{C[0,T]}\le K,\ \bm\ga(t)+\supp\,g\subset\Om,\ 0\le t\le T\},
\end{equation}
where $K>0$ is a constant. In other words, we restrict our consideration in such orbits that they are smooth and start from the origin with a maximum velocity.

First we investigate a special case of a localized moving source. More precisely, for a sufficiently small $\ve>0$, we further restrict the unknown orbit in
\begin{equation}\label{eq-def-U1}
\cU_1:=\{\bm\ga\in\cU_0\mid\|\bm\ga\|_{C[0,T]}\le\ve\},
\end{equation}
which means $\{\bm\ga(t)\}_{0\le t\le T}\subset B_\ve$ for all $\bm\ga\in\cU_1$.

Since there are $d$ components in the orbit, it is natural to take at least $d$ observation points for the {unique} identification. Within the admissible set $\cU_1$, we pick the minimum necessary $d$ observation points $\bm x^j$ ($j=1,\ldots,d$) and make the following key assumption: there exists a constant $C>0$ depending on $g$, $\{\bm x^j\}_{j=1}^d$ and $\ve$ such that
\begin{equation}\label{eq-assume-obs}
\left|\begin{pmatrix}
\nb g(\bm y^1) & \nb g(\bm y^2) & \cdots & \nb g(\bm y^d)
\end{pmatrix}^{-1}\right|\le C,\quad\forall\,\bm y^j\in\ov{B_\ve(\bm x^j)}\,,\ j=1,\ldots,d.
\end{equation}
In other words, we assume that the matrix $(\nb g(\bm y^1)\ \cdots\ \nb g(\bm y^d))$ is invertible for all $(\bm y^1,\ldots,\bm y^d)\in\prod_{j=1}^d\ov{B_\ve(\bm x^j)}\,$.

\begin{example}
We rephrase assumptions \eqref{eq-def-U1} and \eqref{eq-assume-obs} in the case of $d=1$. For any $\ga\in\cU_1$, we have $\ga(0)=0$ and $|\ga(t)|\le\ve$, $|\ga'(t)|\le K$ for $0\le t\le T$. As for the observation point $x^1$, the assumption \eqref{eq-assume-obs} means
\[
|g'(y)|\ge C^{-1}>0,\quad\forall\,y\in[x^1-\ve,x^1+\ve],
\]
which implies $[x^1-\ve,x^1+\ve]\subset\supp\,g'$.
\end{example}

Now we can state Lipschitz stability and uniqueness results for Problem \ref{prob-IMSP} with the observation data taken at $\{\bm x^j\}_{j=1}^d\times[0,T]$.

\begin{theorem}\label{thm-local}
Fix $\bm\ga_1,\bm\ga_2\in\cU_1$, where $\cU_1$ was defined by \eqref{eq-def-U1}. Denote by $u_1$ and $u_2$ the solutions to \eqref{eq-IBVP-u} with $\bm\ga=\bm\ga_1$ and $\bm\ga=\bm\ga_2$, respectively. If the set of observation points $\{\bm x^j\}_{j=1}^d$ satisfies \eqref{eq-assume-obs}, then there exists a constant $C>0$ depending on $g,$ $\{\bm x^j\}_{j=1}^d$ and $\cU_1$ such that
\[
\|\bm\ga_1-\bm\ga_2\|_{C[0,T]}\le C\sum_{j=1}^d\|\pa_t^\al(u_1-u_2)(\bm x^j,\,\cdot\,)\|_{C[0,T]}.
\]
Especially, $u_1(\bm x^j,\,\cdot\,)=u_2(\bm x^j,\,\cdot\,)\ (j=1,\ldots,d)$ on $[0,T]$ implies $\bm\ga_1=\bm\ga_2$ on $[0,T]$.
\end{theorem}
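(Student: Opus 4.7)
The plan is to reduce the estimate to a closed integral inequality for
\[
E(t) := \|\bm\ga_1 - \bm\ga_2\|_{C[0,t]}
\]
of the form
\[
E(t) \le C_0 \sum_{j=1}^d \|\pa_t^\al(u_1 - u_2)(\bm x^j, \,\cdot\,)\|_{C[0,T]} + C_1 \int_0^t (t - s)^{\al - 1} E(s)\,\rd s,
\]
and then invoke a weakly singular (Henry-type) Gronwall inequality to absorb the integral term.

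First I would set $w := u_1 - u_2$ and $F(\bm x, t) := g(\bm x - \bm\ga_1(t)) - g(\bm x - \bm\ga_2(t))$. Since $w$ solves $(\pa_t^\al + \cL)w = F$ with zero initial data, evaluating the governing equation at each observation point yields the pointwise identity
\[
F(\bm x^j, t) = \pa_t^\al w(\bm x^j, t) + \cL w(\bm x^j, t),\quad j = 1, \ldots, d.
\]
A one-dimensional mean value theorem applied along the segment joining $\bm\ga_2(t)$ and $\bm\ga_1(t)$ produces a point $\bm\eta^j(t) \in \ov{B_\ve}$ with $F(\bm x^j, t) = -\nb g(\bm x^j - \bm\eta^j(t))^\T (\bm\ga_1(t) - \bm\ga_2(t))$; stacking these $d$ scalar identities gives a linear system whose coefficient matrix, by \eqref{eq-assume-obs} and the norm equivalence \eqref{eq-norm}, has a uniformly bounded inverse. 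Consequently,
\[
|\bm\ga_1(t) - \bm\ga_2(t)| \le C \sum_{j=1}^d \bigl(|\pa_t^\al w(\bm x^j, t)| + |\cL w(\bm x^j, t)|\bigr).
\]

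The technical heart is to control $|\cL w(\bm x^j, t)|$ by the past history of $|\bm\ga_1 - \bm\ga_2|$. For this I would apply the fractional Duhamel principle (Lemma~\ref{lem-Duhamel}) to write $w(\bm x, t) = \int_0^t D_t^{\lceil\al\rceil - \al} v(\bm x, t; s)\,\rd s$, where $v(\,\cdot\,,\,\cdot\,; s)$ solves \eqref{eq-Duhamel-v} with data generated by $F(\,\cdot\,, s)$. Substituting the eigenfunction representation \eqref{eq-sol-IBVP-V} (bounded $\Om$) or the Fourier representation \eqref{eq-sol-IVP-V} (whole space) for $v$, and invoking Lemma~\ref{lem-RL-ML} to reduce the Riemann--Liouville derivative of the Mittag-Leffler kernel, yields in the bounded case
\[
\cL w(\bm x^j, t) = \sum_{n=1}^\infty \la_n \vp_n(\bm x^j) \int_0^t (F(\,\cdot\,, s), \vp_n)\,(t - s)^{\al - 1} E_{\al,\al}(-\la_n (t - s)^\al)\,\rd s,
\]
with a completely analogous Fourier formula on $\BR^d$. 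Since $g \in C_0^\infty(\Om)$ and $\bm\ga_i \in \cU_1$, the mean value theorem in $\bm x$ gives the $s$-uniform bound $\|F(\,\cdot\,, s)\|_{H^p(\Om)} \le C_p\,|\bm\ga_1(s) - \bm\ga_2(s)|$ for every $p \ge 0$. Pairing sufficiently many $\cL$-derivatives of $F$ against inverse powers of $\la_n$ extracted from \eqref{eq-est-ML} via Cauchy--Schwarz, and using Sobolev embedding to pass to the pointwise value at $\bm x^j$, I expect to obtain
\[
|\cL w(\bm x^j, t)| \le C \int_0^t (t - s)^{\al - 1} |\bm\ga_1(s) - \bm\ga_2(s)|\,\rd s,
\]
which is precisely the convolution structure needed to close the argument via Henry's inequality.

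The main obstacle will be this last pointwise estimate: the kernel $\la_n (t - s)^{\al - 1} E_{\al,\al}(-\la_n (t - s)^\al)$ decays only like $(t - s)^{-1}$ as $\la_n \to \infty$, which by itself is not $s$-integrable, so integrability in $s$ (uniformly in $n$) must be purchased entirely from the rapid decay of $(F(\,\cdot\,, s), \vp_n)$ coming from the smoothness of $g$. The argument must also be carried out in parallel for $\al \in (0,1]$, $\al \in (1,2)$, and---in the whole-space case---$\al = 2$, where \eqref{eq-sol-IVP-V} switches from a Mittag-Leffler to a sine-type kernel. Finally, because $\bm\ga_1(0) = \bm\ga_2(0) = \bm0$ forces $F(\,\cdot\,, 0) \equiv 0$, the integral inequality is well-defined at $t = 0$, and the uniqueness assertion (that $u_1 \equiv u_2$ at the $d$ receivers implies $\bm\ga_1 \equiv \bm\ga_2$) follows at once by setting the first term on the right-hand side to zero.
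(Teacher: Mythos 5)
Your proposal is correct and follows essentially the same route as the paper: the mean value theorem at the observation points combined with \eqref{eq-assume-obs} to invert the coefficient matrix, the fractional Duhamel principle (Lemma \ref{lem-Duhamel}) with the representations \eqref{eq-sol-IBVP-V}/\eqref{eq-sol-IVP-V} and Lemma \ref{lem-RL-ML} to express $\cL w(\bm x^j,t)$ as a Volterra convolution with kernel bounded by $C(t-s)^{\al-1}$ (the $\la_n$ or $|\bm\xi|^2$ growth absorbed by the smoothness of the source difference, exactly as you anticipate), and Henry's weakly singular Gr\"onwall inequality to close the estimate. The only (cosmetic) difference is that you bound $F(\,\cdot\,,s)$ directly in Sobolev norm by $|\bm\ga_1(s)-\bm\ga_2(s)|$, whereas the paper keeps the factorization $F=\sum_k G_k\rho_k$ inside the kernel matrix $\bm Q(t,s)$; both yield the same integral inequality.
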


Our main result Theorem \ref{thm-local} requires condition \eqref{eq-assume-obs} for $\bm x^j$, $j=1,2,\ldots,N$, and especially the number $N$ of the monitoring points $\bm x^j$ should be at least $d$ which is the spatial dimensions. This is reasonable because as unknowns we have to determine $d$ components of $\bm\ga(t)$, and our data are $N$ functions in $t\in[0,T]$.

The key to proving the above theorem is reducing the original problem to a vector-valued Volterra integral equation of the second kind with respect to the difference $\bm\ga_1-\bm\ga_2$. To this end, the representations of solutions to \eqref{eq-IBVP-u} are essential, where Lemmas \ref{lem-Duhamel} and \ref{lem-FP-V} play important roles. Such an argument is also witnessed in \cite{SY11,FK16,WLL16} which also rely on similar non-vanishing assumptions as \eqref{eq-assume-obs}. Nevertheless, due to the nonlinearity of our problem with respect to the orbits, assumption \eqref{eq-assume-obs} looks more complicated than that in \cite{SY11,FK16,WLL16}.

Remarkably, the constant $C$ in the stability estimate of Theorem \ref{thm-local} does not depend on the order $\al\in(0,2]$. Indeed, such a uniform estimate of Lipschitz type is achieved at the cost of accessing the $\al$th order derivative of the observation data. Meanwhile, one can also see from the proof that the ill-posedness resulted from $\al$ is overwhelmed by the key assumption \eqref{eq-assume-obs} along with the admissible set $\cU_1$.

In Theorem \ref{thm-local}, the Lipschitz stability with minimum possible observation points is achieved within the admissible set $\cU_1$ in \eqref{eq-def-U1}, which is rather restrictive. Moreover, since \eqref{eq-assume-obs} implies $\bm x^j\in\supp(\nb g)$ ($j=1,\ldots,d$), the required observation condition seems also strict in practice. On the opposite direction, we can remove the localization assumption $\|\bm\ga\|_{C[0,T]}\le\ve$ in \eqref{eq-def-U1} and obtain a uniqueness result at the cost of very dense observation points.

\begin{corollary}\label{cor-global}
Fix $\bm\ga_1,\bm\ga_2\in\cU_0$, where $\cU_0$ was defined by \eqref{eq-def-U0}. Denote by $u_1$ and $u_2$ the solutions to \eqref{eq-IBVP-u} with $\bm\ga=\bm\ga_1$ and $\bm\ga=\bm\ga_2$, respectively. Assume that there exist a finite set of observation points $X:=\{\bm x^j\}_{j=1}^N$ and a constant $\ve>0$ such that for any $\bm y\in\Om\cap\ov{B_{K T}}\,$, there exist $d$ observation points $\{\bm x^j(\bm y)\}_{j=1}^d\subset X\cap B_\de(\bm y)$ and a constant $C>0$ such that
\begin{equation}\label{eq-assume-obs'}
\left|\begin{pmatrix}
\nb g(\bm z^1) & \cdots & \nb g(\bm z^d)
\end{pmatrix}^{-1}\right|\le C,\quad\forall\,\bm z^j\in\ov{B_\ve(\bm x^j(\bm y)-\bm y)}\,,\ j=1,\ldots,d.
\end{equation}
Then the relation $u_1(\bm x^j,\,\cdot\,)=u_2(\bm x^j,\,\cdot\,)\ (j=1,\ldots,N)$ on $[0,T]$ implies $\bm\ga_1=\bm\ga_2$ on $[0,T]$.
\end{corollary}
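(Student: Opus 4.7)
The plan is a time-continuation (bootstrap) argument anchored at a moving base point and driven by (a localized analogue of) Theorem \ref{thm-local}. Define
\[
t^* := \sup\bigl\{s \in [0,T] : \bm\ga_1 \equiv \bm\ga_2 \text{ on } [0,s]\bigr\},
\]
which is well defined because $\bm\ga_1(0) = \bm\ga_2(0) = \bm0$. The goal is to show $t^* = T$; assuming $t^* < T$, a contradiction will be derived by extending the equality to a strictly larger interval.

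Set $\bm y := \bm\ga_1(t^*) = \bm\ga_2(t^*)$, which lies in $\Om \cap \ov{B_{KT}}$ since $|\bm y| \le K t^*$ and $\bm\ga_i \in \cU_0$ forces $\bm y \in \Om$. The hypothesis supplies $d$ observation points $\{\bm x^j(\bm y)\}_{j=1}^d \subset X \cap B_\de(\bm y)$ satisfying \eqref{eq-assume-obs'}. Choose $\tau > 0$ small enough that $\bm\ga_i(t) \in B_\ve(\bm y)$ for $t \in [t^*, t^* + \tau]$ and $i = 1, 2$, which is possible by continuity together with $\|\bm\ga_i'\|_{C[0,T]} \le K$. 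By well-posedness of \eqref{eq-IBVP-u} and the coincidence of sources on $[0, t^*]$, $u_1 \equiv u_2$ on $\Om \times [0, t^*]$, so $w := u_1 - u_2$ vanishes there. A direct computation from the definition of $\pa_t^\al$ shows that the time-translate $\wt w(\bm x, \tau') := w(\bm x, t^* + \tau')$ satisfies
\[
\pa_{\tau'}^\al \wt w(\bm x, \tau') = (\pa_t^\al w)(\bm x, t^* + \tau'), \quad 0 < \tau' < T - t^*,
\]
together with vanishing Cauchy data at $\tau' = 0$, so $\wt w$ solves an IBVP of the form \eqref{eq-IBVP-u} on $\Om \times (0, \tau)$ with source $g(\bm x - \bm\ga_1(t^* + \tau')) - g(\bm x - \bm\ga_2(t^* + \tau'))$.

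Replaying the proof of Theorem \ref{thm-local} on this shifted system with observation points $\bm x^j(\bm y) \in X$, a mean-value expansion of the source difference yields a matrix
\[
M(\tau') = \bigl(\nb g(\bm\ze_1(\tau')) \; \cdots \; \nb g(\bm\ze_d(\tau'))\bigr), \quad \bm\ze_j(\tau') \in \ov{B_\ve(\bm x^j(\bm y) - \bm y)},
\]
for which \eqref{eq-assume-obs'} supplies the uniform bound $|M(\tau')^{-1}| \le C$. Combining Lemmas \ref{lem-Duhamel} and \ref{lem-FP-V} then converts the observation identities $\wt u_1(\bm x^j(\bm y), \cdot) \equiv \wt u_2(\bm x^j(\bm y), \cdot)$ on $[0, \tau]$ into a homogeneous vector-valued Volterra equation of the second kind for $\bm\mu(\tau') := \bm\ga_1(t^* + \tau') - \bm\ga_2(t^* + \tau')$, and a Gronwall argument forces $\bm\mu \equiv \bm0$ on $[0, \tau]$. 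This extends the agreement of the orbits to $[0, t^* + \tau]$, contradicting the maximality of $t^*$ and completing the proof.

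The principal technical obstacle is the Caputo-derivative time-shift identity displayed above: because $\pa_t^\al$ carries nonlocal memory back to $t = 0$, one must verify that a function vanishing smoothly on $[0, t^*]$ has Caputo derivative on $[t^*, T]$ equal to that of its translate on $[0, T - t^*]$. This reduces to splitting the defining integral at $s = t^*$ and using $\pa_s^{\lceil\al\rceil} w \equiv 0$ on $[0, t^*]$, after which Lemmas \ref{lem-Duhamel} and \ref{lem-FP-V} apply to $\wt w$ and the localized Volterra argument of Theorem \ref{thm-local} transfers essentially verbatim. Subordinate bookkeeping consists in checking that $\bm x^j(\bm y) \in \Om$ and that the arguments of $g$ entering $M(\tau')$ really lie in $\ov{B_\ve(\bm x^j(\bm y) - \bm y)}$; these follow from $\bm y + \supp g \subset \Om$ together with $\supp g \subset B_\de$ and the choice of $\tau$.
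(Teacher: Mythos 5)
Your proposal is correct and follows essentially the same route as the paper: both proofs propagate the local uniqueness of Theorem \ref{thm-local} forward in time by using the speed bound $\|\bm\ga'\|_{C[0,T]}\le K$ to confine the orbits to an $\ve$-ball around the current base point $\bm y=\bm\ga_1(\cdot)$, invoking well-posedness to get $u_1=u_2$ on the already-determined interval, time-shifting the difference $w$ (your Caputo-shift identity, which the paper leaves implicit, is exactly what justifies the homogeneous initial data for the shifted problem), and applying \eqref{eq-assume-obs'} at the shifted points $\bm x^j(\bm y)$. The only difference is organizational: the paper runs a finite induction over uniform steps $T_\ell=\ve\ell/K$, while you phrase it as a supremum/continuation argument; the two are interchangeable here since your admissible step $\tau$ can be taken uniformly equal to $\ve/K$.
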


As one can imagine, the above corollary follows from the repeated application of Theorem \ref{thm-local}, where the invertibility assumption \eqref{eq-assume-obs'} is a generalization of \eqref{eq-assume-obs}. It suffices to restrict $\bm y\in\Om$ in the ball $\ov{B_{K T}}$ because $\|\bm\ga\|_{C[0,T]}\le K T$ for any $\bm\ga\in\cU_0$ by the definition \eqref{eq-def-U0} of $\cU_0$. Since $\Om\cap\ov{B_{K T}}$ is bounded, the number $N$ of observation points can definitely be finite.

\begin{example}
In the one-dimensional case, if $g$ takes the form of a bell-shaped function \eqref{eq-bell}, then it is readily seen that a choice of $\ve$ and $X$ in Corollary $\ref{cor-global}$ can be
\[
\ve=\f\de9,\quad\bm x^j=\f{(-1)^j\lfloor j/2\rfloor\de}4,\quad N=\lceil4(K T+\de)/\de\rceil.
\]
\end{example}

\section{Proofs of Lemmas \ref{lem-RL-ML}--\ref{lem-FP-V}}\label{app-proof}

\begin{proof}[Proof of Lemma $\ref{lem-RL-ML}$]
By the definitions of Mittag-Leffler functions and the Riemann-Liouville derivative, we direct calculate
\begin{align*}
D_t^{\lceil\be\rceil-\be}\left(t^{\lceil\be\rceil-1}E_{\be,\lceil\be\rceil}(\la\,t^\be)\right) & =\f1{\Ga(\be-\lfloor\be\rfloor)}\sum_{\ell=0}^\infty\f{\la^\ell}{\Ga(\be\ell+\lceil\be\rceil)}\f\rd{\rd t}\int_0^t\f{s^{\be\ell+\lceil\be\rceil-1}}{(t-s)^{\lceil\be\rceil-\be}}\,\rd s\\
& =\f1{\Ga(\be-\lfloor\be\rfloor)}\sum_{\ell=0}^\infty\f{\Ga(\be\ell+\lceil\be\rceil)\Ga(\be-\lfloor\be\rfloor)\la^\ell(t^{\be(\ell+1)})'}{\Ga(\be\ell+\lceil\be\rceil)\Ga(\be\ell+\lceil\be\rceil+\be-\lfloor\be\rfloor)}\\
& =\sum_{\ell=0}^\infty\f{\be(\ell+1)\la^\ell t^{\be(\ell+1)-1}}{\Ga(\be(\ell+1)+1)}=t^{\be-1}\sum_{\ell=0}^\infty\f{(\la\,t^\be)^\ell}{\Ga(\be\ell+\be)}=t^{\be-1}E_{\be,\be}(\la\,t^\be),
\end{align*}
where we have used the formula $\Ga(\be+1)=\be\,\Ga(\be)$.
\end{proof}

\begin{proof}[Proof of Lemma $\ref{lem-Duhamel}$]
The case of $\be\in\BN$ is straightforward and we only give a proof for the case of $\be\not\in\BN$. Actually, it suffices to verify that the function $u$ defined by \eqref{eq-Duhamel} satisfies \eqref{eq-Duhamel-u}. Since $F,u,v$ are assumed to be smooth, we can take any derivatives when needed.

First, it follows from the definition of the Riemann-Liouville derivative that
\begin{equation}\label{eq-Duhamel-pf1}
D_t^{\lceil\be\rceil-\be}F(\,\cdot\,,t)=\f1{\Ga(\be-\lfloor\be\rfloor)}\f\pa{\pa t}\int_0^t\f{F(\,\cdot\,,t-s)}{s^{\lceil\be\rceil-\be}}\,\rd s=\f1{\Ga(\be-\lfloor\be\rfloor)}\left(\f{F(\,\cdot\,,0)}{t^{\lceil\be\rceil-\be}}+\int_0^t\f{\pa_s F(\,\cdot\,,s)}{(t-s)^{\lceil\be\rceil-\be}}\,\rd s\right).
\end{equation}
Next, from \eqref{eq-Duhamel-u} we calculate
\begin{align*}
\pa_t u(\,\cdot\,,t) & =D_t^{\lceil\be\rceil-\be}v(\,\cdot\,,t;t)+\int_0^t\pa_t D_t^{\lceil\be\rceil-\be}v(\,\cdot\,,t;s)\,\rd s\\
& =\left\{\!\begin{alignedat}{2}
& D_t^{1-\be}F(\,\cdot\,,t)+\int_0^t D_t^{1-\be}\pa_t v(\,\cdot\,,t;s)\,\rd s, & \quad & 0<\be<1,\\
& \int_0^t D_t^{\lceil\be\rceil-\be}\pa_t v(\,\cdot\,,t;s)\,\rd s, & \quad & \be>1,
\end{alignedat}\right.
\end{align*}
where we used the initial condition at $t=s$ in \eqref{eq-Duhamel-v}. Inductively, we obtain
\begin{equation}\label{eq-diff-u}
\pa_t^m u(\,\cdot\,,t)=\left\{\!\begin{alignedat}{2}
& \int_0^t D_t^{\lceil\be\rceil-\be}\pa_t^m v(\,\cdot\,,t;s)\,\rd s, & \quad & m<\lceil\be\rceil,\\
& D_t^{\lceil\be\rceil-\be}F(\,\cdot\,,t)+\int_0^t D_t^{\lceil\be\rceil-\be}\pa_t^{\lceil\be\rceil}v(\,\cdot\,,t;s)\,\rd s, & \quad & m=\lceil\be\rceil.
\end{alignedat}\right.
\end{equation}
Since $v$ is sufficiently smooth, for $m<\lceil\be\rceil$ we pass $t\to0$ in \eqref{eq-diff-u} to find
\[
\pa_t^m u(\,\cdot\,,0)=0,\quad m=0,\ldots,\lceil\be\rceil-1,
\]
i.e., the function $u$ satisfies the initial condition in \eqref{eq-Duhamel-u}. Meanwhile, substituting \eqref{eq-diff-u} with $m=\lceil\be\rceil$ into the definition of the Caputo derivative gives
\begin{equation}\label{eq-Duhamel-pf2}
\pa_t^\be u(\,\cdot\,,t)=\f1{\Ga(\lceil\be\rceil-\be)}\int_0^t\f{\pa_s^{\lceil\be\rceil}u(\,\cdot\,,s)}{(t-s)^{\be-\lfloor\be\rfloor}}\,\rd s=:\f{\I_1+\I_2}{\Ga(\lceil\be\rceil-\be)},
\end{equation}
where
\[
\I_1:=\int_0^t\f{D_s^{\lceil\be\rceil-\be}F(\,\cdot\,,s)}{(t-s)^{\be-\lfloor\be\rfloor}}\,\rd s,\quad\I_2:=\int_0^t\f1{(t-s)^{\be-\lfloor\be\rfloor}}\int_0^s D_s^{\lceil\be\rceil-\be}\pa_s^{\lceil\be\rceil}v(\,\cdot\,,s;r)\,\rd r\rd s.
\]
For $\I_1$, the application of \eqref{eq-Duhamel-pf1} yields
\begin{align}
\I_1 & =\f1{\Ga(\be-\lfloor\be\rfloor)}\int_0^t\f1{(t-s)^{\be-\lfloor\be\rfloor}}\left(\f{F(\,\cdot\,,0)}{s^{\lceil\be\rceil-\be}}+\int_0^s\f{\pa_r F(\,\cdot\,,r)}{(s-r)^{\lceil\be\rceil-\be}}\,\rd r\right)\rd s\nonumber\\
& =\f1{\Ga(\be-\lfloor\be\rfloor)}\left(F(\,\cdot\,,0)\int_0^t\f{\rd s}{(t-s)^{\be-\lfloor\be\rfloor}s^{\lceil\be\rceil-\be}}+\int_0^t\pa_r F(\,\cdot\,,r)\int_r^t\f{\rd s}{(t-s)^{\be-\lfloor\be\rfloor}(s-r)^{\lceil\be\rceil-\be}}\,\rd r\right)\nonumber\\
& =\Ga(\lceil\be\rceil-\be)\left(F(\,\cdot\,,0)+\int_0^t\pa_r F(\,\cdot\,,r)\,\rd r\right)=\Ga(\lceil\be\rceil-\be)\,F(\,\cdot\,,t).
\end{align}
For $\I_2$, by suitably exchanging the order of integration, we utilize the definition of Caputo and Riemann-Liouville derivatives to calculate
\begin{align}
\I_2 & =\int_0^t\f1{(t-s)^{\be-\lfloor\be\rfloor}}\int_0^s D_s^{\lceil\be\rceil-\be}\pa_s^{\lceil\be\rceil}v(\,\cdot\,,s;r)\,\rd r\rd s\nonumber\\
& =\f1{\Ga(\be-\lfloor\be\rfloor)}\int_0^t\f1{(t-s)^{\be-\lceil\be\rceil}}\int_0^s\pa_s\int_0^{s-r}\f{\pa_s^{\lceil\be\rceil}v(\,\cdot\,,s-\tau;r)}{\tau^{\lceil\be\rceil-\be}}\,\rd\tau\rd r\rd s\nonumber\\
& =\f1{\Ga(\be-\lfloor\be\rfloor)}\int_0^t\f1{(t-s)^{\be-\lceil\be\rceil}}\int_0^s\f{\pa_r^{\lceil\be\rceil}v(\,\cdot\,,r;r)}{(s-r)^{\lceil\be\rceil-\be}}\,\rd r\rd s\nonumber\\
& \quad\,+\f1{\Ga(\be-\lfloor\be\rfloor)}\int_0^t\f1{(t-s)^{\be-\lceil\be\rceil}}\int_0^s\!\!\int_r^s\f{\pa_\tau^{\lceil\be\rceil+1}v(\,\cdot\,,\tau;r)}{(s-\tau)^{\lceil\be\rceil-\be}}\,\rd\tau\rd r\rd s\nonumber\\
& =\f1{\Ga(\be-\lfloor\be\rfloor)}\int_0^t\pa_r^{\lceil\be\rceil}v(\,\cdot\,,r;r)\int_r^t\f{\rd s}{(t-s)^{\be-\lfloor\be\rfloor}(s-r)^{\lceil\be\rceil-\be}}\,\rd r\nonumber\\
& \quad\,+\f1{\Ga(\be-\lfloor\be\rfloor)}\int_0^t\!\!\int_0^\tau\pa_\tau^{\lceil\be\rceil+1}v(\,\cdot\,,\tau;r)\int_\tau^t\f{\rd s}{(t-s)^{\be-\lfloor\be\rfloor}(s-\tau)^{\lceil\be\rceil-\be}}\,\rd r\rd\tau\nonumber\\
& =\Ga(\lceil\be\rceil-\be)\left(\int_0^t\pa_r^{\lceil\be\rceil}v(\,\cdot\,,r;r)\,\rd r+\int_0^t\!\!\int_0^\tau\pa_\tau^{\lceil\be\rceil+1}v(\,\cdot\,,\tau;r)\,\rd r\rd\tau\right).
\end{align}
On the other hand, since the operator $\cP$ is independent of $t$, we calculate $-\cP u$ as
\begin{align}
-\cP u(\,\cdot\,,t) & =-\int_0^t D_t^{\lceil\be\rceil-\be}\cP v(\,\cdot\,,t;s)\,\rd s=\int_0^t D_t^{\lceil\be\rceil-\be}\pa_t^\be v(\,\cdot\,,t;s)\,\rd s\nonumber\\
& =\int_0^t\f{\pa_t}{\Ga(\be-\lfloor\be\rfloor)}\int_s^t\f1{(t-r)^{\lceil\be\rceil-\be}}\f1{\Ga(\lceil\be\rceil-\be)}\int_s^r\f{\pa_\tau^{\lceil\be\rceil}v(\,\cdot\,,\tau;s)}{(r-\tau)^{\be-\lfloor\be\rfloor}}\,\rd\tau\rd r\rd s\nonumber\\
& =\f1{\Ga(\be-\lfloor\be\rfloor)\Ga(\lceil\be\rceil-\be)}\int_0^t\pa_t\int_0^{t-s}\f1{r^{\lceil\be\rceil-\be}}\int_s^{t-r}\f{\pa_\tau^{\lceil\be\rceil}v(\,\cdot\,,\tau;s)}{(t-r-\tau)^{\be-\lfloor\be\rfloor}}\,\rd\tau\rd r\rd s\nonumber\\
& =\f1{\Ga(\be-\lfloor\be\rfloor)\Ga(\lceil\be\rceil-\be)}\int_0^t\!\!\int_0^{t-s}\f{\pa_t}{r^{\lceil\be\rceil-\be}}\int_s^{t-r}\f{\pa_\tau^{\lceil\be\rceil}v(\,\cdot\,,\tau;s)}{(t-r-\tau)^{\be-\lfloor\be\rfloor}}\,\rd\tau\rd r\rd s\nonumber\\
& =\f1{\Ga(\be-\lfloor\be\rfloor)\Ga(\lceil\be\rceil-\be)}\left\{\int_0^t\!\!\int_0^{t-s}\f{\pa_s^{\lceil\be\rceil}v(\,\cdot\,,s;s)}{r^{\lceil\be\rceil-\be}(t-s-r)^{\be-\lfloor\be\rfloor}}\,\rd r\rd s+\I_3\right\}\nonumber\\
& =\int_0^t\pa_s^{\lceil\be\rceil}v(\,\cdot\,,s;s)\,\rd s+\f{\I_3}{\Ga(\be-\lfloor\be\rfloor)\Ga(\lceil\be\rceil-\be)},
\end{align}
where
\begin{align}
\I_3 & :=\int_0^t\!\!\int_0^{t-s}\f1{r^{\lceil\be\rceil-\be}}\int_0^{t-s-r}\f{\pa_t^{\lceil\be\rceil+1}v(\,\cdot\,,t-r-\tau;s)}{\tau^{\be-\lfloor\be\rfloor}}\,\rd\tau\rd r\rd s\nonumber\\
&=\int_0^t\!\!\int_s^t\f1{(r-s)^{\lceil\be\rceil-\be}}\int_r^t\f{\pa_t^{\lceil\be\rceil+1}v(\,\cdot\,,t-\tau+s;s)}{(\tau-r)^{\be-\lfloor\be\rfloor}}\,\rd\tau\rd r\rd s\nonumber\\
& =\int_0^t\!\!\int_0^\tau\pa_t^{\lceil\be\rceil+1}v(\,\cdot\,,t-\tau+s;s)\int_s^\tau\f{\rd r}{(\tau-r)^{\be-\lfloor\be\rfloor}(r-s)^{\lceil\be\rceil-\be}}\,\rd s\rd\tau\nonumber\\
& =\Ga(\be-\lfloor\be\rfloor)\Ga(\lceil\be\rceil-\be)\int_0^t\!\!\int_0^\tau\pa_t^{\lceil\be\rceil+1}v(\,\cdot\,,t-\tau+s;s)\,\rd s\rd\tau\nonumber\\
& =\Ga(\be-\lfloor\be\rfloor)\Ga(\lceil\be\rceil-\be)\int_0^t\!\!\int_0^{t-s}\pa_\tau^{\lceil\be\rceil+1}v(\,\cdot\,,\tau+s;s)\,\rd\tau\rd s\nonumber\\
& =\Ga(\be-\lfloor\be\rfloor)\Ga(\lceil\be\rceil-\be)\int_0^t\!\!\int_0^\tau\pa_\tau^{\lceil\be\rceil+1}v(\,\cdot\,,\tau;s)\,\rd s\rd\tau.\label{eq-Duhamel-pf3}
\end{align}
The combination of \eqref{eq-Duhamel-pf2}--\eqref{eq-Duhamel-pf3} immediately indicates
\begin{align*}
(\pa_t^\be+\cP)u(\,\cdot\,,t) & =\f{\I_1+\I_2}{\Ga(\lceil\be\rceil-\be)}-\int_0^t\pa_s^{\lceil\be\rceil}v(\,\cdot\,,s;s)\,\rd s-\f{\I_3}{\Ga(\be-\lfloor\be\rfloor)\Ga(\lceil\be\rceil-\be)}\\
& =F(\,\cdot\,,t)+\int_0^t\!\!\int_0^\tau\pa_\tau^{\lceil\be\rceil+1}v(\,\cdot\,,\tau;r)\,\rd r\rd\tau-\f{\I_3}{\Ga(\be-\lfloor\be\rfloor)\Ga(\lceil\be\rceil-\be)}=F(\,\cdot\,,t).
\end{align*}
Therefore, it is verified that the function $u$ defined by \eqref{eq-Duhamel} indeed satisfies \eqref{eq-Duhamel-u}, and the proof of Lemma \ref{lem-Duhamel} is completed.
\end{proof}

\begin{proof}[Proof of Lemma $\ref{lem-FP-V}$]
If $\Om$ is a bounded domain, the results follow immediately by the same argument as that in Sakamoto and Yamamoto \cite{SY11}. Henceforth we only deal with the unbounded case of $\Om=\BR^d$.

Recalling the definition of $S(\bm\xi)$ in Lemma \ref{lem-FP-V}, formally we have $\cF(\cL V(\,\cdot\,,t))(\bm\xi)=S(\bm\xi)\wt V(\bm\xi,t)$. Then taking Fourier transform in \eqref{eq-IBVP-V} with respect to the spatial variables yields a fractional ordinary differential equation with a parameter $\bm\xi$:
\[
\begin{cases}
(\pa_t^\al+S(\bm\xi))\wt V(\bm\xi,t)=0, & t>0,\\
\begin{cases}
\wt V(\bm\xi,0)=\wt v_0(\bm\xi) & \mbox{if }0<\al\le1,\\
\wt V(\bm\xi,0)=0,\ \pa_t\wt V(\bm\xi,0)=\wt v_1(\bm\xi) & \mbox{if }1<\al\le2.
\end{cases}
\end{cases}
\]
The solution to the above equation turns out to be
\begin{equation}\label{eq-sol-ODE}
\wt V(\bm\xi,t)=\begin{cases}
\wt v_{\lceil\al\rceil-1}(\bm\xi)t^{\lceil\al\rceil-1}E_{\al,\lceil\al\rceil}(-S(\bm\xi)t^\al), & 0<\al<2,\\
\wt v_1(\bm\xi)S(\bm\xi)^{-1/2}\sin(S(\bm\xi)^{1/2}t), & \al=2,
\end{cases}
\end{equation}
where $S(\bm\xi)\ge0$ for $\al=2$ because we assumed $\bm b=\bm0$ and $c\ge0$ in this case.

For any fixed $t\ge0$, our aim is to verify the boundedness of $\|V(\,\cdot\,,t)\|_{H^p(\BR^d)}$. In the case of $0<\al<2$, we have to estimate $|E_{\al,\lceil\al\rceil}(-S(\bm\xi)t^\al)|$. Denoting by $\ka>0$ the smallest eigenvalue of the strict positive definite matrix $\bm A$, we see
\[
\mathrm{Re}\,S(\bm\xi)=\bm A\bm\xi\cdot\bm\xi+c\ge\f\ka2|\bm\xi|^2\mbox{ for }|\bm\xi|\gg1,\quad|\mathrm{Im}\,S(\bm\xi)|=|\bm b\cdot\bm\xi|\le|\bm b||\bm\xi|.
\]
Hence, there exists a constant $R=R(\al)>0$ such that $\f{\pi\al}2<|\mathrm{arg}(-S(\bm\xi)t^\al)|\le\pi$ for $|\bm\xi|\ge R$. Then we can employ \eqref{eq-est-ML} to estimate
\[
|E_{\al,\lceil\al\rceil}(-S(\bm\xi)t^\al)|\le\f C{1+|S(\bm\xi)|t^\al}\le\f C{1+|\bm\xi|^2t^\al},\quad|\bm\xi|\ge R.
\]
For $|\bm\xi|<R$, it is readily seen that $|E_{\al,\lceil\al\rceil}(-S(\bm\xi)t^\al)|$ is uniformly bounded.

For $0<\al\le1$, we divide $\BR^d=B_R\cup(\BR^d\setminus B_R)$ and use \eqref{eq-sol-ODE} to estimate
\begin{align*}
\|V(\,\cdot\,,t)\|_{H^p(\BR^d)}^2 & =\left(\int_{B_R}+\int_{\BR^d\setminus B_R}\right)(1+|\bm\xi|^2)^p|\wt v_0(\bm\xi)|^2|E_{\al,1}(-S(\bm\xi)t^\al)|^2\,\rd\bm\xi\\
& \le C\int_{B_R}(1+|\bm\xi|^2)^p|\wt v_0(\bm\xi)|^2\,\rd\bm\xi+\int_{\BR^d\setminus B_R}(1+|\bm\xi|^2)^p|\wt v_0(\bm\xi)|^2\left(\f C{1+|\bm\xi|^2t^\al}\right)^2\,\rd\bm\xi\\
& \le C\int_{\BR^d}(1+|\bm\xi|^2)^p|\wt v_0(\bm\xi)|^2\,\rd\bm\xi=C\|v_0\|_{H^p(\BR^d)}^2.
\end{align*}
For $1<\al<2$, we utilize the same argument as above and the uniform boundedness of $\f{\ze^{1/\al}}{1+\ze}$ for $\ze\ge0$ to deduce
\begin{align*}
\|V(\,\cdot\,,t)\|_{H^p(\BR^d)}^2 & =t^2\left(\int_{B_R}+\int_{\BR^d\setminus B_R}\right)(1+|\bm\xi|^2)^p|\wt v_1(\bm\xi)|^2|E_{\al,2}(-S(\bm\xi)t^\al)|^2\,\rd\bm\xi\\
& \le C\,t^2\int_{B_R}(1+|\bm\xi|^2)^{2/\al}(1+|\bm\xi|^2)^{p-2/\al}|\wt v_1(\bm\xi)|^2\,\rd\bm\xi\\
& \quad\,+\int_{\BR^d\setminus B_R}(1+|\bm\xi|^2)^{p-2/\al}|\wt v_1(\bm\xi)|^2\left(\f{C((1+|\bm\xi|^2)t^\al)^{1/\al}}{1+|\bm\xi|^2t^\al}\right)^2\,\rd\bm\xi\\
& \le C(t^2+1)\int_{\BR^d}(1+|\bm\xi|^2)^{p-2/\al}|\wt v_1(\bm\xi)|^2\,\rd\bm\xi=C(t^2+1)\|v_1\|_{H^{p-2/\al}(\BR^d)}^2.
\end{align*}

In the case of $\al=2$, thanks to the assumption $\bm b=\bm0$ and $c\ge0$, we have $S(\bm\xi)\ge\ka|\bm\xi|^2$. Then we estimate \eqref{eq-sol-ODE} as
\begin{align*}
\|V(\,\cdot\,,t)\|_{H^p(\BR^d)}^2 & =\left(\int_{B_1}+\int_{\BR^d\setminus B_1}\right)\f{(1+|\bm\xi|^2)^p|\wt v_1(\bm\xi)||\sin(S(\bm\xi)^{1/2}t)|}{S(\bm\xi)}\,\rd\bm\xi\\
& \le\int_{B_1}(1+|\bm\xi|^2)^{p-1}|\wt v_1(\bm\xi)|^2(1+|\bm\xi|^2)\f{S(\bm\xi)t^2}{S(\bm\xi)}\,\rd\bm\xi+\int_{\BR^d\setminus B_1}(1+|\bm\xi|^2)^{p-1}|\wt v_1(\bm\xi)|^2\f{1+|\bm\xi|^2}{\ka|\bm\xi|^2}\,\rd\bm\xi\\
& \le C\,t^2\int_{B_1}(1+|\bm\xi|^2)^{p-1}|\wt v_1(\bm\xi)|^2\,\rd\bm\xi+C\int_{\BR^d\setminus B_1}(1+|\bm\xi|^2)^{p-1}|\wt v_1(\bm\xi)|^2\,\rd\bm\xi\\
& \le C(t^2+1)\|v_1\|_{H^{p-1}(\BR^d)}^2.
\end{align*}
The proof of Lemma \ref{lem-FP-V} is completed.
\end{proof}

\section{Proofs of the Main Results}\label{sec:4}

\begin{proof}[Proof of Theorem $\ref{thm-local}$]
Let $u_1,u_2$ be the solutions to \eqref{eq-IBVP-u} with orbits $\bm\ga_1,\bm\ga_2\in\cU_1$, respectively. Setting $w:=u_1-u_2$, it is easy to observe that $w$ satisfies the following initial(-boundary) value problem
\begin{equation}\label{eq-IVP-w}
\begin{cases}
(\pa_t^\al+\cL)w=G & \mbox{in }\Om\times(0,T),\\
\begin{cases}
w=0 & \mbox{if }0<\al\le1,\\
w=\pa_t w=0 & \mbox{if }1<\al\le2
\end{cases} & \mbox{in }\Om\times\{0\},\\
w=0\quad\mbox{if }\Om\mbox{ is bounded} & \mbox{on }\pa\Om\times(0,T),
\end{cases}
\end{equation}
where $G(\bm x,t):=g(\bm x-\bm\ga_1(t))-g(\bm x-\bm\ga_2(t))$. According to the mean value theorem, there exists a smooth function $\bm\eta:\Om\times(0,T)\longrightarrow\BR^d$ such that
\[
G(\bm x,t)=\nb g(\bm\eta(\bm x,t))\cdot\bm\rho(t)=\sum_{k=1}^d G_k(\bm x,t)\rho_k(t),
\]
where $\bm\eta(\bm x,t)$ is a point lying on the segment between $\bm x-\bm\ga_1(t)$ and $\bm x-\bm\ga_2(t)$, and
\[
\bm\rho:=\bm\ga_2-\bm\ga_1=(\rho_1,\ldots,\rho_d)^\T,\quad G_k(\bm x,t):=\pa_k g(\bm\eta(\bm x,t))\ (k=1,\ldots,d).
\]
Substituting the observation points $\bm x=\bm x^j$ ($j=1,\ldots,d$) into the governing equation of \eqref{eq-IVP-w}, we obtain
\begin{equation}\label{eq-linear1}
\sum_{k=1}^d G_k(\bm x^j,t)\rho_k(t)=\pa_t^\al w(\bm x^j,t)+\cL w(\bm x^j,t),\quad j=1,\ldots,d.
\end{equation}

In order to give a representation of $\cL w(\bm x^j,t)$, we take advantage of Lemma \ref{lem-Duhamel} to write $\cL w$ as
\begin{equation}\label{eq-rep-w}
\cL w(\,\cdot\,,t)=\int_0^t D_t^{\lceil\al\rceil-\al}\cL v(\,\cdot\,,t;s)\,\rd s,\quad0<t\le T,
\end{equation}
where $v$ satisfies the following homogeneous initial(-boundary) value problem with a parameter $s\in(0,T)$:
\[
\begin{cases}
(\pa_t^\al+\cL)v=0 & \mbox{in }\Om\times(s,T),\\
\begin{cases}
v=G(\,\cdot\,,s) & \mbox{if }0<\al\le1,\\
v=0,\ \pa_t v=G(\,\cdot\,,s) & \mbox{if }1<\al\le2
\end{cases} & \mbox{in }\Om\times\{s\},\\
v=0\quad\mbox{if }\Om\mbox{ is bounded} & \mbox{on }\pa\Om\times(s,T).
\end{cases}
\]
In the case of a bounded domain $\Om$, it follows from \eqref{eq-sol-IBVP-V} that
\[
\cL v(\,\cdot\,,t;s)=\sum_{n=1}^\infty\la_n(G(\,\cdot\,,s),\vp_n)(t-s)^{\lceil\al\rceil-1}E_{\al,\lceil\al\rceil}(-\la_n(t-s)^\al)\vp_n.
\]
Using Lemma \ref{lem-RL-ML}, we substitute the above equality into \eqref{eq-rep-w} with $\bm x=\bm x^j$ to represent
\begin{align}
\cL w(\bm x^j,t) & =\int_0^t\sum_{n=1}^\infty\la_n(G(\,\cdot\,,s),\vp_n)(t-s)^{\al-1}E_{\al,\al}(-\la_n(t-s)^\al)\vp_n(\bm x^j)\,\rd s\nonumber\\
& =\int_0^t\sum_{k=1}^d q_{jk}(t,s)\rho_k(s)\,\rd s,\quad j=1,\ldots,d,\label{eq-rep-Aw}
\end{align}
where
\begin{equation}\label{eq-IBVP-q}
q_{jk}(t,s):=(t-s)^{\al-1}\sum_{n=1}^\infty\la_n(G_k(\,\cdot\,,s),\vp_n)\vp_n(\bm x^j)E_{\al,\al}(-\la_n(t-s)^\al).
\end{equation}
In the case of $\Om=\BR^d$, we turn to the Fourier transform to see $\cF(\cL v(\,\cdot\,,t;s))=S\,\wt v(\,\cdot\,,t;s)$, where we recall $S(\bm\xi)=\bm A\bm\xi\cdot\bm\xi+\ri\,\bm b\cdot\bm\xi+c$. Then it follows from \eqref{eq-sol-IVP-V} that
\[
\cF(\cL v(\,\cdot\,,t;s))(\bm\xi)=\begin{cases}
S(\bm\xi)\wt G(\bm\xi,s)(t-s)^{\lceil\al\rceil-1}E_{\al,\lceil\al\rceil}(-S(\bm\xi)t^\al), & 0<\al<2,\\
S(\bm\xi)^{1/2}\wt G(\bm\xi,s)\sin(S(\bm\xi)^{1/2}(t-s)), & \al=2.
\end{cases}
\]
Taking the inverse Fourier transform in the above equality and applying Lemma \ref{lem-RL-ML} to \eqref{eq-rep-w} again, we obtain
\begin{align*}
\cL w(\,\cdot\,,t) & =\cF^{-1}\left(\int_0^t D_t^{\lceil\al\rceil-\al}\cF(\cL v(\,\cdot\,,t;s))\,\rd s\right)\\
& =\left\{\!\begin{alignedat}{2}
& \cF^{-1}\left(\int_0^t S(\bm\xi)\wt G(\bm\xi,s)(t-s)^{\al-1}E_{\al,\al}(-S(\bm\xi)(t-s)^\al)\,\rd s\right), & \quad & 0<\al<2,\\
& \cF^{-1}\left(\int_0^t S(\bm\xi)^{1/2}\wt G(\bm\xi,s)\sin(S(\bm\xi)^{1/2}(t-s))\,\rd s\right), & \quad & \al=2.
\end{alignedat}\right.
\end{align*}
By $\wt G(\bm\xi,s)=\sum_{k=1}^d\wt G_k(\bm\xi,s)\rho_k(s)$ and taking $\bm x=\bm x^j$ ($j=1,\ldots,d$), again we arrive at the expression \eqref{eq-rep-Aw}, where $q_{jk}(t,s)$ is defined by
\begin{equation}\label{eq-IVP-q}
q_{jk}(t,s):=\begin{cases}
(t-s)^{\al-1}\cF^{-1}(S(\bm\xi)\wt G_k(\bm\xi,s)E_{\al,\al}(-S(\bm\xi)(t-s)^\al))(\bm x^j), & 0<\al<2,\\
\cF^{-1}(S(\bm\xi)^{1/2}\wt G_k(\bm\xi,s)\sin(S(\bm\xi)^{1/2}(t-s)))(\bm x^j), & \al=2.
\end{cases}
\end{equation}

Since the expression \eqref{eq-rep-Aw} is valid for both bounded and unbounded $\Om$, we plug \eqref{eq-rep-Aw} into \eqref{eq-linear1} and rewrite it in form of a linear system as
\begin{equation}\label{eq-linear2}
\bm P(t)\bm\rho(t)=\pa_t^\al\bm h(t)+\int_0^t\bm Q(t,s)\bm\rho(s)\,\rd s,
\end{equation}
where $\bm h(t):=(w(\bm x^1,t),\ldots,w(\bm x^d,t))^\T$ and
\[
\bm P(t):=(G_k(\bm x^j,t))_{1\le j,k\le d},\quad\bm Q(t,s):=(q_{jk}(t,s))_{1\le j,k\le d}
\]
are $d\times d$ matrices. Recalling the admissible set $\cU_1$ for $\bm\ga_1$ and $\bm\ga_2$, we see that $\bm\eta(\bm x^j,t)\in B_\ve(\bm x^j)$ for all $t\in[0,T]$. Therefore, by $G_k(\bm x,t)=\pa_k g(\bm\eta(\bm x,t))$, the key assumption \eqref{eq-assume-obs} indicates that the matrix
\[
\bm P(t)=\begin{pmatrix}
\nb g(\eta(\bm x^1,t)) & \nb g(\eta(\bm x^2,t)) & \cdots & \nb g(\eta(\bm x^d,t))
\end{pmatrix}^\T
\]
is invertible for all $t\in[0,T]$. In other words, there exists a constant $C>0$ such that
\begin{equation}\label{eq-inv-P}
|\bm P(t)^{-1}|\le C,\quad\forall\,t\in[0,T].
\end{equation}

As for the matrix $\bm Q(t,s)$, it suffices {to estimate $|q_{jk}(t,s)|$ appearing} in \eqref{eq-IBVP-q} and \eqref{eq-IVP-q} separately. In the case of \eqref{eq-IBVP-q}, the uniform boundedness of $E_{\al,\al}(-\ze)$ for $\ze\ge0$ yields
\begin{align*}
|q_{jk}(t,s)| & \le C(t-s)^{\al-1}\left|\sum_{n=1}^\infty\la_n(G_k(\,\cdot\,,s),\vp_n)\vp_n(\bm x^j)\right|\\
& =C(t-s)^{\al-1}|\cL G_k(\bm x^j,s)|=C(t-s)^{\al-1}|\cL\pa_k g(\bm\eta(\bm x^j,t))|\\
& \le C(t-s)^{\al-1}\|\cL\pa_k(g\circ\bm\eta)(\,\cdot\,,s)\|_{C(\ov\Om)}.
\end{align*}
Since $g$ is a given smooth function and $\bm\eta$ is also smooth and depends only on the admissible set $\cU_1$, it turns out that $\|\cL\pa_k(g\circ\bm\eta)\|_{C(\ov\Om\times[0,T])}$ are uniformly bounded for $k=1,\ldots,d$, implying
\begin{equation}\label{eq-est-q}
|q_{jk}(t,s)|\le C(t-s)^{\al-1},\quad1\le j,k\le d.
\end{equation}
For \eqref{eq-IVP-q}, we deal with the cases of $0<\al<2$ and $\al=2$ separately. For $0<\al<2$, the similar argument to that in the proof of Lemma \ref{lem-FP-V} guarantees a constant $R=R(\al)>0$ such that $\f{\pi\al}2<|\mathrm{arg}(-S(\bm\xi)t^\al)|\le\pi$ for $|\bm\xi|\ge R$. Then we employ \eqref{eq-est-ML} to estimate
\begin{align*}
|E_{\al,\al}(-S(\bm\xi)(t-s)^\al)| & \le\left\{\!\begin{alignedat}{2}
& C, & \quad & |\bm\xi|\le R,\\
& \f C{1+|S(\bm\xi)|(t-s)^\al}, & \quad & |\bm\xi|\ge R
\end{alignedat}\right.\\
& \le C,\quad\bm\xi\in\BR^d,\ 0\le s<t\le T.
\end{align*}
On the other hand, it is readily seen that $|S(\bm\xi)|\le|\bm A\bm\xi\cdot\bm\xi|+|\bm b\cdot\bm\xi|+|c|\le C(1+|\bm\xi|^2)$. Thus, based on the definition of the inverse Fourier transform, we can estimate
\begin{align}
|q_{jk}(t,s)| & =(t-s)^{\al-1}\left|\cF^{-1}\left(S(\bm\xi)\wt G_k(\bm\xi,s)E_{\al,\al}(-S(\bm\xi)(t-s)^\al)\right)(\bm x^j)\right|\nonumber\\
& \le\f{(t-s)^{\al-1}}{(2\pi)^{d/2}}\int_{\BR^d}|S(\bm\xi)||\wt G_k(\bm\xi,s)||E_{\al,\al}(-S(\bm\xi)(t-s)^\al)||\e^{\ri\,\bm\xi\cdot\bm x^j}|\,\rd\bm\xi\nonumber\\
& \le C(t-s)^{\al-1}\int_{\BR^d}\left((1+|\bm\xi|^2)^{(d+5)/4}|\wt G_k(\bm\xi,s)|\right)(1+|\bm\xi|^2)^{-(d+1)/4}\,\rd\bm\xi\nonumber\\
& \le C(t-s)^{\al-1}\left(\int_{\BR^d}(1+|\bm\xi|^2)^{(d+5)/2}|\wt G_k(\bm\xi,s)|^2\right)^{1/2}\left(\int_{\BR^d}\f{\rd\bm\xi}{(1+|\bm\xi|^2)^{(d+1)/2}}\right)^{1/2}\label{eq-ineq-CS}\\
& \le C\|G_k(\,\cdot\,,s)\|_{H^{(d+5)/2}(\BR^d)}(t-s)^{\al-1},\nonumber
\end{align}
where we used the Cauchy-Schwarz inequality in \eqref{eq-ineq-CS}. Since $g,\bm\eta$ are smooth and $g$ is compactly supported, we see that $G_k=\pa_k(g\circ\bm\eta)$ is also smooth and compactly supported, indicating the uniform boundedness of $\|G_k(\,\cdot\,,s)\|_{H^{(d+5)/2}(\BR^d)}$ for $0<s<T$ and $k=1,\ldots,d$. Therefore, again we arrive at \eqref{eq-est-q} in the case $\Om=\BR^d$ with $0<\al<2$. Finally, for $\al=2$ we estimate in the same manner as
\begin{align*}
|q_{jk}(t,s)| & \le\f1{(2\pi)^{d/2}}\int_{\BR^d}S(\bm\xi)^{1/2}|\wt G_k(\bm\xi,s)||\sin(S(\bm\xi)^{1/2}(t-s))||\e^{\ri\,\bm\xi\cdot\bm x^j}|\,\rd\bm\xi\\
& \le C(t-s)\int_{\BR^d}S(\bm\xi)|\wt G_k(\bm\xi,s)|\,\rd\bm\xi\le C(t-s)\int_{\BR^d}(1+|\bm\xi|^2)|\wt G_k(\bm\xi,s)|\,\rd\bm\xi\\
& \le C(t-s)\left(\int_{\BR^d}(1+|\bm\xi|^2)^{(d+5)/2}|\wt G_k(\bm\xi,s)|^2\,\rd\bm\xi\right)^{1/2}\\
& \le C(t-s)\|G_k(\,\cdot\,,s)\|_{H^{(d+5)/2}(\BR^d)}\le C(t-s),\quad1\le j,k\le d,
\end{align*}
which is consistent with \eqref{eq-est-q}. Consequently, it reveals that the upper bound \eqref{eq-est-q} holds for both cases of domains and remains valid for any $0<\al\le2$. This together with \eqref{eq-norm} implies the estimate
\[
|\bm Q(t,s)|\le C\|\bm Q(t,s)\|_\F=C\left(\sum_{j,k=1}^d|q_{jk}(t,s)|\right)^{1/2}\le C(t-s)^{\al-1}.
\]
The combination of \eqref{eq-linear2}, \eqref{eq-inv-P} and the above estimate yields
\[
|\bm\rho(t)|\le|\bm P(t)^{-1}|\left(|\pa_t^\al\bm h(t)|+\int_0^t|\bm Q(t,s)||\bm\rho(s)|\,\rd s\right)\le C\left(|\pa_t^\al\bm h(t)|+\int_0^t(t-s)^{\al-1}|\bm\rho(s)|\,\rd s\right).
\]
Eventually, we employ {Gr\"onwall's} inequality with a weakly singular kernel (see Henry \cite[Lemma 7.1.1]{H81}) to conclude
\begin{align*}
|\bm\rho(t)| & \le C\left(|\pa_t^\al\bm h(t)|+C\int_0^t\left.\f\rd{\rd\ze}E_{\al,1}(\ze^\al)\right|_{\ze=C(t-s)}|\pa_s^\al\bm h(s)|\,\rd s\right)\\
& \le C\|\pa_t^\al\bm h\|_{C[0,T]}\left(1+\int_0^t s^{\al-1}E_{\al,\al}(C s^\al)\,\rd s\right)\le C\|\pa_t^\al\bm h\|_{C[0,T]}
\end{align*}
for $0<t\le T$, and hence
\begin{align*}
\|\bm\ga_1-\bm\ga_2\|_{C[0,T]} & =\|\bm\rho\|_{C[0,T]}\le C\|\pa_t^\al\bm h\|_{C[0,T]}\le C\left(\sum_{j=1}^d\|\pa_t^\al w(\bm x^j,\,\cdot\,)\|_{C[0,T]}^2\right)^{1/2}\\
& \le C\sum_{j=1}^d\|\pa_t^\al(u_1-u_2)(\bm x^j,\,\cdot\,)\|_{C[0,T]}.
\end{align*}
This completes the proof of Theorem \ref{thm-local}.
\end{proof}

\begin{remark} {In three dimensions, if the source moves on the plane $\{x_3=C\}$ where $C\in \BR$ is known, then two observation points are sufficient to imply the stability. Analogously,
if $d'\ (1\le d'<d, d>1)$ components of the orbit function are known, then the number of observation points can be reduced to $d-d'$.}
\end{remark}

\begin{proof}[Proof of Corollary $\ref{cor-global}$]
Fix the set $X$ of observation points and the constant $\ve$ in the assumption of Corollary \ref{cor-global}. For any $\bm\ga\in\cU_0$, by $\|\bm\ga'\|_{C[0,T]}\le K$ we know that $\bm\ga(t)\in\ov{B_\ve}$ for $t\le\ve/K$. Then we define
\[
T_\ell=\begin{cases}
\ve\ell/K, & \ell=0,1,\ldots,\lceil K T/\ve\rceil-1,\\
T, & \ell=\lceil K T/\ve\rceil
\end{cases}
\]
and consider the intervals $[T_{\ell-1},T_\ell]$ ($\ell=1,\ldots,\lceil K T/\ve\rceil$) successively.

We adopt an inductive argument and start from $\ell=1$. On $[T_0,T_1]=[0,T_1]$, the above observation implies $\|\bm\ga_i\|_{C[0,T_1]}\le\ve$ ($i=1,2$). Taking $\bm y=\bm0$ in \eqref{eq-assume-obs'}, we see that there exist $d$ observation points $\{\bm x^j(\bm0)\}_{j=1}^d\subset X$ satisfying \eqref{eq-assume-obs}. {Observing} that all assumptions in Theorem \ref{thm-local} are fulfilled, we utilize the uniqueness result in Theorem \ref{thm-local} to conclude that {the relation} $u_1(\bm x^j(\bm0),\,\cdot\,)=u_2(\bm x^j(\bm0),\,\cdot\,)$ ($j=1,\ldots,d$) on $[0,T_1]$ implies $\bm\ga_1=\bm\ga_2$ on $[0,T_1]$.

For general $\ell=2,\ldots,\lceil K T/\ve\rceil$, we make the induction hypothesis that the relation $u_1(\bm x^j,\,\cdot\,)=u_2(\bm x^j,\,\cdot\,)$ ($j=1,\ldots,N$) on $[0,T_{\ell-1}]$ implies $\bm\ga_1=\bm\ga_2$ on $[0,T_{\ell-1}]$. By the well-posedness of the forward problem, we have $u_1=u_2$ in $\Om\times[0,T_{\ell-1}]$. Introducing
\[
w_\ell(\bm x,t):=(u_1-u_2)(\bm x,t+T_{\ell+1}),
\]
we immediately see that $w_\ell$ satisfies the equation
\[
(\pa_t^\al+\cL)w_\ell=g(\bm x-\bm\ga_1(t+T_{\ell-1}))-g(\bm x-\bm\ga_2(t+T_{\ell-1}))
\]
in $(\bm\ga_1(T_{\ell-1})+\Om)\times(0,T-T_{\ell-1})$ with the homogeneous initial(-boundary) condition. Repeating the same argument as that in the proofs of Theorem \ref{thm-local} and the case of $\ell=1$, we can take $\bm y=\bm y_\ell:=\bm\ga_1(T_{\ell-1})$ in \eqref{eq-assume-obs'}, so that again we can find $\{\bm x^j(\bm y_\ell)\}_{j=1}^d\subset X$ such that \eqref{eq-assume-obs} is fulfilled with $\bm x^j$ replaced by $\bm x^j(\bm y_\ell)-\bm y_\ell$ ($j=1,\ldots,d$). Since all assumptions in Theorem \ref{thm-local} are satisfied in $(\bm\ga_1(T_{\ell-1})+\Om)\times(0,T_\ell-T_{\ell-1})$, we conclude that $w_\ell(\bm x^j(\bm y_\ell),\,\cdot\,)=0$ ($j=1,\ldots,d$) on $[0,T_\ell-T_{\ell-1}]$ implies $\bm\ga_1-\bm\ga_2=\bm0$ on $[T_{\ell-1},T_\ell]$ or equivalently,
\begin{equation}\label{eq-induction}
u_1(\bm x^j(\bm y_\ell),\,\cdot\,)=u_2(\bm x^j(\bm y_\ell),\,\cdot\,)\ (j=1,\ldots,d)\mbox{ on }[T_{\ell-1},T_\ell]\mbox{ implies }\bm\ga_1=\bm\ga_2\mbox{ on }[T_{\ell-1},T_\ell].
\end{equation}
By the {inductive argument,  for any } $\ell=1,\ldots,\lceil K T/\ve\rceil$ there exists a set of $d$ observation points $\{\bm x^j(\bm y_\ell)\}_{\ell=1}^d\subset X$ ($\bm y_\ell=\bm\ga_1(T_{\ell-1})$) such that \eqref{eq-induction} holds. Consequently, the proof is completed by collecting the uniqueness on all intervals $[T_{\ell-1},T_\ell]$.
\end{proof}

\section*{Acknowledgement}
This work is partly supported by the A3 Foresight Program ``Modeling and Computation of Applied Inverse Problems'', Japan Society for the Promotion of Science (JSPS) and National Natural Science Foundation of China (NSFC).
G. Hu is supported by the NSFC grant (No. 11671028) and NSAF grant (No. U1930402).
Y. Liu and M. Yamamoto are supported by JSPS KAKENHI Grant Number JP15H05740.
M. Yamamoto is partly supported by NSFC (Nos. 11771270, 91730303) and RUDN University Program 5-100.

\end{document}